\pgfplotsset{compat=1.15}
\newcommand{\vv}{\mathrm{v}}
\newcommand{\p}{\mathfrak{p}}
\newcommand{\q}{\mathfrak{q}}
\newcommand{\ass}{\mathrm{Ass}}
\newtheorem{theorem}{Theorem}[section]
\newtheorem{definition}[theorem]{Definition}
\newtheorem{remark}[theorem]{Remark}
\newtheorem{corollary}[theorem]{Corollary}
\newtheorem{property}[theorem]{Property}
\title{Binomial expansion and the $\mathrm{v}$-number}
\author{Kamalesh Saha}
\address{Chennai Mathematical Institute, Siruseri, Tamil Nadu, India - 603103.}
\email{ksaha@cmi.ac.in; kamalesh.saha44@gmail.com}
\keywords{$\vv$-number, binomial expansion, symbolic power, integral closure of powers}
\subjclass{13F20, 13F55, 13B22}
\begin{document}

\begin{abstract}
Let $I\subset A$ and $J\subset B$ be two monomial ideals, where $A$ and $B$ are two polynomial rings with disjoint variables. Considering a general set-up of monomial filtrations, we study the behaviour of the $\vv$-function under binomial expansion. As an application, we get an explicit formula of $\vv((I+J)^{(k)})$ in terms of $\vv(I^{(i)})$ and $\vv(J^{(j)})$, where $L^{(k)}$ denote the symbolic power of an ideal $L$. Furthermore, an analogous formula is extended for the $\vv$-function of integral closure of $(I+J)^k$.
\end{abstract}

\maketitle

\section{Introduction}

Let $R=\bigoplus_{d\geq 0}R_{d}$ be a standard graded polynomial ring over a field $K$ and $L\subset R$ be a proper graded ideal. We denote the set of associated primes of $L$ by $\ass(L)$. The invariant called \textit{$\vv$-number} of $L$, introduced by Cooper et al. in \cite{cstpv20}, denoted by $\mathrm{v}(L)$, is defined as
$$\mathrm{v}(L):=\min\{d\geq 0 \mid \exists\,\, f\in R_d \text{ and } \mathfrak{p}\in \ass(L) \text{ with } L:f=\mathfrak{p}\}.$$
For each $\mathfrak{p}\in \mathrm{Ass}(L)$, one can define the \textit{local} $\vv$-number of $L$ at $\mathfrak{p}$, denoted by $\mathrm{v}_{\p}(L)$ as $\mathrm{v}_{\p}(L):=\min\{d\geq 0 \mid \exists\,\, f\in R_d \text{ with } L:f=\mathfrak{p}\}$. Thus, $\vv(L)=\min\{\vv_{\mathfrak{p}}(L)\mid \mathfrak{p}\in\mathrm{Ass}(L)\}$. The notion of $\vv$-number was introduced to investigate the asymptotic behaviour of the minimum distance function of projective Reed-Muller-type codes (see \cite{cstpv20}). Presently, the study of the $\vv$-number is emerging as a significant trend in the theory of commutative algebra. Numerous inquiries have already been carried out to examine the properties and potential applications of this number (see \cite{fm24_vsum} and references therein for details).\par 

Let $A=K[x_1,\ldots, x_n]$ and $B=K[y_1,\ldots, y_m]$ be two polynomial rings with disjoint sets of variables over the same field $K$. Let $I\subset A$ and $J\subset B$ be two graded ideals and $I+J:=IS+JS$, where $S=A\otimes_{K} B$. Now, corresponding to a fixed operation on ideals (such as powers, symbolic powers, integral closure of powers), consider a graded $I$-filtration $\{I_k\}$ and a graded $J$-filtration $\{J_k\}$. Then, a natural study is to investigate algebraic invariants like regularity, depth, etc., of $\{(I+J)_k\}$ in terms of those invariants of $\{I_k\}$ and $\{J_k\}$. In this direction, researchers showed significant interest when $(I+J)_k$ satisfies the binomial expansion $(I+J)_{k}=\sum_{i+j=k}I_{i}J_{j}$. This type of binomial expansion holds in the case of ordinary power filtration, symbolic power filtration, and, under some conditions, integral closure of powers, etc. For all these scenarios, people investigated the behaviour of the regularity and depth functions on $\{(I+J)_{k}\}$ in terms of those functions on $\{I_k\}$ and $\{J_k\}$ (see \cite{htt16sumpower}, \cite{hntt20symbsum}, \cite{hjkn23symbsumgen}, \cite{bha23}).\par 

Recently, Ficarra and Macias Marques \cite{fm24_vsum} studied the $\vv$-function of $(I+J)^k$. If $I$ and $J$ are monomial ideals, they gave an exact formula of $\vv((I+J)^k)$ in terms of $\vv(I^i)$ and $\vv(J^j)$, where $1\leq i,j\leq k$. Due to some computational evidence, they expect that their formula can be extended for arbitrary graded ideals. However, this would be an extremely challenging problem because till now, we do not know about the additivity of $\vv$-numbers, i.e. whether $\vv(I+J)=\vv(I)+\vv(J)$ holds always true or not. In \cite{ssvmon22}, the authors showed the additivity of $\vv$-numbers for monomial ideals, and in \cite{ambhore2023v}, the additivity of $\vv$-numbers has been shown for some other classes of graded ideals.\par 

In this paper, we give an explicit formula of the $\vv$-function of $(I+J)_k$ in terms of the $\vv$-function of $I_i$ and $J_j$, when $\{I_k\}$ and $\{J_k\}$ are filtrations of monomial ideals and $(I+J)_k$ satisfies the binomial expansion (see \Cref{thmmain}). The approach used to prove our main result resembles that of \cite[Theorem 4.1]{fm24_vsum}, where we assume certain appropriate conditions naturally fulfilled by ordinary power filtration. As a consequence, we derive a formula for $\vv(I+J)^{(k)}$ in terms of $\vv(I^{(i)})$ and $\vv(I^{(j)})$ as follows:
\medskip

\noindent \textbf{Corollary \ref{corsympower}.} Let $I\subset A$ and $J\subset B$ be two monomial ideals. If $\p\subset A$ and $\q\subset B$ be two monomial prime ideals such that $\p+\q\in\ass(I+J)^{(k)}$, then 
    $$\vv_{\p+\q}((I+J)^{(k)})=\min_{\substack{0\leq d<k \\ 
    \p\in\ass(I^{(k-d)}) \\ \q\in\ass(J^{(d+1)})}}(\vv_{\p}(I^{(k-d)})+\vv_{\q}(J^{(d+1)})).$$
    In particular, we have 
    $$\vv((I+J)^{(k)})= \min_{\substack{0\leq d<k \\ 
    \p\in\ass(I^{(k-d)}) \\ \q\in\ass(J^{(d+1)}) \\ \p+\q\in\ass((I+J)^{(k)})}}(\vv_{\p}(I^{(k-d)})+\vv_{\q}(J^{(d+1)})).$$
\medskip

Let $\overline{L}$ denote the integral closure of an ideal $L\subset R$. Then $\{\overline{L^k}\}$ forms a graded $L$-filtration of $R$. In \Cref{corintclosure}, we additionally establish that an analogous formula, akin to those observed in cases of ordinary and symbolic power filtrations, holds true for $\vv(\overline{(I+J)^k})$ whenever $\overline{(I+J)^k}$ satisfies the binomial expansion. For instance, when (i) $I$ is square-free and $\overline{I^{k}}=I^{(k)}$ for all $k\geq 1$, or (ii) $I$ is a normally torsion-free square-free monomial ideal.

\section{The $\vv$-number of binomial expansion}

Now, we will investigate the behaviour of the $\vv$-number with respect to the binomial expansion in the case of monomial ideals. Let us start by exploring monomial ideals and some of their properties.

\subsection{Monomial ideals} A monomial in a polynomial ring $R=K[z_1,\ldots,z_t]$ is a polynomial of the form $z_{1}^{c_{1}}\cdots z_{t}^{c_{t}}$ with each $c_{i}\in \mathbb{N}$, where $\mathbb{N}$ denotes the set of all non-negative integers. For a vector $\mathbf{c}=(c_1,\ldots,c_t)\in\mathbb{N}^t$, we write $\mathbf{z^c}$ to denote the monomial $z_{1}^{c_{1}}\cdots z_{t}^{c_{t}}$, i.e. $\mathbf{z^c}=z_{1}^{c_{1}}\cdots z_{t}^{c_{t}}$. An ideal $L\subset R$ is called a \textit{monomial ideal} if it is minimally generated by a set of monomials in $R$. The set of minimal monomial generators of $L$ is unique and denoted by $\mathcal{G}(L)$. If $\mathcal{G}(L)$ consists of only square-free monomials (i.e., if each $c_{i}\in\{0,1\}$), then we say $L$ is a \textit{square-free} monomial ideal. \par 

Let $L$ and $L'$ be two ideals of $R$. Then $L:L'= \{u\in R \mid uv\in L\,\,\text{for\,\, all}\,\,v\in L'\}$ is an ideal of $R$, known as the \textit{colon  ideal} of $L$ with respect to $L'$. 
For an element $f\in R$, we write $L:f$ to mean the ideal $L:\langle f\rangle$. If $L$ is a monomial ideal and $f\in R$ is a monomial, then it is well known that 
$$L:f=\big\langle\dfrac{u}{\mathrm{gcd}(u,f)}\mid u\in \mathcal{G}(L)\big\rangle.$$
The above expression of the colon ideal of a monomial ideal by a monomial plays a crucial role in establishing our results. Due to the above expression of colon ideals of monomial ideals, it is straightforward to verify the following two facts:
\medskip

\noindent \textbf{Fact 1.} Let $L_1,\ldots,L_k\subset R$ be monomial ideals and $f\in R$ be any monomial. Then $$(\sum_{i=1}^{k}L_i):f=\sum_{i=1}^{k}(L_i:f).$$

\noindent \textbf{Fact 2.} Let $A=K[x_1,\ldots,x_n]$ and $B=K[y_1,\ldots,y_m]$ be two polynomial rings with disjoint set of variables. Let $I\subset A$, $J\subset B$ be two monomial ideals and $\mathbf{x^a}\in A$, $\mathbf{y^b}\in B$ be any two monomials. Then 
$$IJ:\mathbf{x^ay^b}=(I:\mathbf{x^a})(J:\mathbf{y^b}).$$

\begin{definition}{\rm
    A \textit{graded filtration} of $R$ is a family $\mathcal{I} = \{I_{k}\}_{k\geq 0}$ of graded ideals of $R$ satisfying: (i) $I_{0} = R$; (ii) $I_{k+1} \subseteq I_{k}$ for all $k \geq 0$; (iii) $I_{k}I_{r} \subseteq I_{k+r}$ for all $k,r \geq 0$. Let $I$ be a graded ideal of $R$. A graded filtration $\mathcal{I}=\{I_{k}\}_{k\geq 0}$ is said to be a \textit{graded $I$-filtration} if $I^k\subseteq I_{k}$ for all $k\geq 1$. The ordinary powers, symbolic powers, and integral closure of powers of a graded ideal $I$ form $I$-filtrations of $R$.
}
\end{definition}

In this paper, we make use of filtrations, which satisfy the following property.

\begin{property}\label{property}{\rm
    Corresponding to a monomial ideal $I$ in a polynomial ring $R$, we consider a graded $I$-filtration $\mathcal{I}=\{I_{k}\}_{k\geq 0}$ with the following properties:
\begin{enumerate}
    \item[(a)] Each $I_{k}$ is a monomial ideal;
    \item[(b)] If $\p\in \ass(I_{k})$ and $f\in R$ be a monomial such that $I_k:f=\p$, then $f\in I_{k-1}$.
\end{enumerate}
}
\end{property}

\begin{theorem}\label{thmmain}
    Let $I\subset A$ and $J\subset B$ be two monomial ideals such that $(I+J)_{k}=\sum_{i+j=k}I_{i}J_{j}$, where the filtration $\{I_i\}$ and $\{J_j\}$ satisfy \Cref{property}. If $\p\subset A$ and $\q\subset B$ be two monomial prime ideals such that $\p+\q\in\ass(I+J)_k$, then 
    $$\vv_{\p+\q}((I+J)_{k})=\min_{\substack{0\leq d<k \\ 
    \p\in\ass(I_{k-d}) \\ \q\in\ass(J_{d+1})}}(\vv_{\p}(I_{k-d})+\vv_{\q}(J_{d+1})).$$
    Moreover, we have 
    $$\vv((I+J)_{k})= \min_{\substack{0\leq d<k \\ 
    \p\in\ass(I_{k-d}) \\ \q\in\ass(J_{d+1}) \\ \p+\q\in\ass((I+J)_{k})}}(\vv_{\p}(I_{k-d})+\vv_{\q}(J_{d+1})).$$
\end{theorem}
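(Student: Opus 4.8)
The plan is to prove the two displayed equalities by establishing a matching pair of inequalities, exactly in the spirit of \cite[Theorem 4.1]{fm24_vsum}, but working with the abstract filtrations governed by \Cref{property} rather than ordinary powers. Throughout I would fix the monomial prime $\p+\q\in\ass((I+J)_k)$ and work with monomial witnesses only: since $(I+J)_k$ is a monomial ideal (by \Cref{property}(a) applied to $\{I_i\}$, $\{J_j\}$ and the binomial expansion), for any associated prime the colon ideal realizing the $\vv$-number can be taken to be the colon by a monomial, because $\ass$ of a monomial ideal consists of monomial primes and the $\vv$-number of a monomial ideal is always attained by a monomial (this is standard and presumably recalled in the monomial-ideals subsection). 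So write the generic monomial witness as $\mathbf{x^a y^b}$ with $\mathbf{x^a}\in A$, $\mathbf{y^b}\in B$.

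For the inequality $\vv_{\p+\q}((I+J)_k)\le \min(\cdots)$: given $0\le d<k$ with $\p\in\ass(I_{k-d})$ and $\q\in\ass(J_{d+1})$, pick monomials $\mathbf{x^a}$ with $I_{k-d}:\mathbf{x^a}=\p$ of degree $\vv_\p(I_{k-d})$ and $\mathbf{y^b}$ with $J_{d+1}:\mathbf{y^b}=\q$ of degree $\vv_\q(J_{d+1})$. I would then show $(I+J)_k:\mathbf{x^a y^b}=\p+\q$. Using Fact 1, $(I+J)_k:\mathbf{x^ay^b}=\sum_{i+j=k}(I_iJ_j:\mathbf{x^ay^b})=\sum_{i+j=k}(I_i:\mathbf{x^a})(J_j:\mathbf{y^b})$ by Fact 2. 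The $(i,j)=(k-d,d)$ term gives $\p\cdot(J_d:\mathbf{y^b})$ and the $(i,j)=(k-d-1,d+1)$ term gives $(I_{k-d-1}:\mathbf{x^a})\cdot\q$. Here \Cref{property}(b) is used twice: because $I_{k-d}:\mathbf{x^a}=\p$, we get $\mathbf{x^a}\in I_{k-d-1}$, so $I_{k-d-1}:\mathbf{x^a}=(1)$; similarly $J_d:\mathbf{y^b}=(1)$. Hence the sum already contains $\p+\q$. The reverse containment $\sum_{i+j=k}(I_i:\mathbf{x^a})(J_j:\mathbf{y^b})\subseteq\p+\q$ follows because each $I_i:\mathbf{x^a}\subseteq I_{k-d}:\mathbf{x^a}=\p$ when $i\ge k-d$ (filtration decreasing, colon monotone) and each $J_j:\mathbf{y^b}\subseteq\q$ when $j\ge d+1$, and in every index pair $i+j=k$ at least one of these holds; product ideals land in $\p$ or in $\q$ accordingly. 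Thus $\p+\q\in\ass$ is witnessed in degree $\vv_\p(I_{k-d})+\vv_\q(J_{d+1})$, giving the upper bound.

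For the reverse inequality $\vv_{\p+\q}((I+J)_k)\ge \min(\cdots)$: take a monomial $\mathbf{x^a y^b}$ of degree $\vv_{\p+\q}((I+J)_k)$ with $(I+J)_k:\mathbf{x^ay^b}=\p+\q$. Expanding as above, $\sum_{i+j=k}(I_i:\mathbf{x^a})(J_j:\mathbf{y^b})=\p+\q$. The crucial combinatorial step is to identify a single index. Set $d+1:=\min\{j : \mathbf{y^b}\notin J_j\}$ (well-defined since $\mathbf{y^b}\notin J_k\supseteq\cdots$ is forced, else the colon would be contained in $\p\ne\p+\q$ — this needs a short argument using that $\q\ne 0$), so $\mathbf{y^b}\in J_d$ and $J_j:\mathbf{y^b}=(1)$ for $j\le d$, while $J_{d+1}:\mathbf{y^b}\ne(1)$. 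Symmetrically $\mathbf{x^a}\in I_{k-d}$ must fail and one checks $I_{k-d}:\mathbf{x^a}\ne(1)$, $I_i:\mathbf{x^a}=(1)$ for $i\le k-d-1$. Feeding this back into the sum collapses it to $\p+\q=(I_{k-d}:\mathbf{x^a})+(J_{d+1}:\mathbf{y^b})$ as an ideal in $S$ with the first summand in $A$-variables and the second in $B$-variables; hence $I_{k-d}:\mathbf{x^a}=\p$ and $J_{d+1}:\mathbf{y^b}=\q$ after extension/contraction, so $\p\in\ass(I_{k-d})$, $\q\in\ass(J_{d+1})$, $0\le d<k$, and $\deg\mathbf{x^a}\ge\vv_\p(I_{k-d})$, $\deg\mathbf{y^b}\ge\vv_\q(J_{d+1})$, giving $\vv_{\p+\q}((I+J)_k)\ge\vv_\p(I_{k-d})+\vv_\q(J_{d+1})\ge\min(\cdots)$.

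The main obstacle is the reverse inequality's bookkeeping: showing that after fixing $d$ by the minimality condition, the residual sum really collapses to a direct-sum-type decomposition $\p+\q$ with the two pieces separated by variables, and that the ``extra'' terms $(I_i:\mathbf{x^a})(J_j:\mathbf{y^b})$ for the other index pairs do not accidentally contribute generators outside what is needed — this requires carefully using that $(I_i:\mathbf{x^a})$ is the unit ideal for small $i$ (so those products are just $J_j:\mathbf{y^b}\subseteq\q$) and a proper ideal contained in $\p$ for large $i$. A secondary subtlety is the passage between colon ideals computed in $A$ (resp. $B$) and in $S=A\otimes_K B$: one must note $I_i$ as an ideal of $S$ is the extension of an ideal of $A$, colon by an $A$-monomial commutes with this extension, and $\p S+\q S$ is prime with the expected associated-prime behaviour — all of which is routine for monomial ideals but should be stated. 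Once $\vv_{\p+\q}$ is pinned down for every such prime, the ``Moreover'' statement is immediate by taking the minimum over all $\p+\q\in\ass((I+J)_k)$ and using $\vv(L)=\min_{\mathfrak{p}}\vv_{\mathfrak{p}}(L)$.
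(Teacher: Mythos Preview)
Your proposal is correct and follows the same two-inequality strategy as the paper, invoking Facts 1--2 and \Cref{property}(b) at the same junctures. One step needs more care than the word ``symmetrically'' suggests: after defining $d$ via $\mathbf{y^b}$ you assert $\mathbf{x^a}\in I_{k-d-1}$, but this does not follow by symmetry with the definition of $d$---you must first prove $I_{k-d}:\mathbf{x^a}=\p$ (each $x_i\in\p$, being an $A$-variable, can only lie in a summand $(I_{k-h}:\mathbf{x^a})(J_h:\mathbf{y^b})$ with $J_h:\mathbf{y^b}=S$, forcing $h\le d$ and hence $x_i\in I_{k-d}:\mathbf{x^a}$; the reverse inclusion is as you indicate) and \emph{then} apply \Cref{property}(b) to obtain $\mathbf{x^a}\in I_{k-d-1}$. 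The paper carries out exactly this sequence, though it locates $d$ from the $A$-side as $\max_i d_i$ with $x_i\in (I_{k-d_i}:\mathbf{x^a})(J_{d_i}:\mathbf{y^b})$ rather than from the $B$-side; either bookkeeping works once the order of deductions is corrected.
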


\begin{proof}
Without loss of generality, let us assume $\p=\langle x_1,\ldots,x_r\rangle$ and $\q=\langle y_1,\ldots,y_s\rangle$, where $1\leq r\leq n$ and $1\leq s\leq m$. Since $(I+J)_{k}$ is a monomial ideal, there exists a monomial $f=\mathbf{x^ay^b}\in S=A\otimes_{K}B$ such that $(I+J)_{k}:f=\p+\q$ and $\deg(f)=\vv_{\p+\q}((I+J)_{k})$. Now, due to the binomial expansion and Fact 1, we have the following:
\begin{align*}
    (I+J)_k:f &=(\sum_{i+j=k}I_{i}J_{j}):\mathbf{x^ay^b} \\ &=(\sum_{h=0}^{k}I_{k-h}J_{h}):\mathbf{x^ay^b} \\ 
    &=\sum_{h=0}^{k}(I_{k-h}J_{h}:\mathbf{x^ay^b}).
\end{align*}
By Fact 2, the above equality simplifies to
\begin{align}\label{eq1}
    \p+\q=(I+J)_k:f=\sum_{h=0}^{k}(I_{k-h}:\mathbf{x^a})(J_{h}:\mathbf{y^b}).
\end{align}
Therefore, for each $1\leq i\leq r$, there exists $d_{i}\in\{0,\ldots,k\}$ such that 
$$x_{i}\in (I_{k-d_i}:\mathbf{x^a})(J_{d_i}:\mathbf{y^b}).$$
Now, choose $d=\max\{d_1,\ldots,d_r\}$. Then there exists $x\in\{x_1,\ldots,x_r\}$ such that $x\in (I_{k-d}:\mathbf{x^a})(J_{d}:\mathbf{y^b})$. Since $x$ is a variable in $A$, we should have $x\in I_{k-d}:\mathbf{x^a}$ and $J_{d}:\mathbf{y^b}=S$, i.e. $\mathbf{y^b}\in J_d$. Since $J_{d}\subseteq J_{d-1}\subseteq \cdots \subseteq J_{0}$, we have $\mathbf{y^b}\in J_{j}$ for all $0\leq j\leq d$. We claim that $d<k$. Suppose $d=k$. Then $(I_{k-d}:\mathbf{x^a})(J_d:\mathbf{y^b})=S$, and thus, by the equality \eqref{eq1}, we have $(I+J)_{k}:f=S$ lead to a contradiction. Therefore, $d<k$. Now, for each $1\leq i\leq r$, we have $x_{i}\in I_{k-d_i}:\mathbf{x^a}\subseteq I_{k-d}:\mathbf{x^a}$  as $d_i\leq d$. Thus, it is clear that $\p\subseteq I_{k-d}:\mathbf{x^a}$. Due to the equality \eqref{eq1} and $\mathbf{y^b}\in J_{d}$, we have $(I_{k-d}:\mathbf{x^a})(J_{d}:\mathbf{y^b})=I_{k-d}:\mathbf{x^a}\subseteq \p+\q$. Since the minimal monomial generators of $I_{k-d}:\mathbf{x^a}$ belong to the ring $A$, we must have $I_{k-d}:\mathbf{x^a}=\p$, and hence,
\begin{align}\label{eq2}
    \vv_{\p}(I_{k-d})\leq \deg(\mathbf{x^a}).
\end{align}
Since $I_{k-d}:\mathbf{x^a}=\p$, we have $\mathbf{x^a}\not\in I_{k-d}$, and thus, by the choice of our filtration, $\mathbf{x^a}\in I_{k-(d+1)}$. Therefore, $\mathbf{x^a}\in I_{k-h}$, i.e. $I_{k-h}:\mathbf{x^a}=S$ for all $d+1\leq h\leq k$ as $I_{k-(d+1)}\subseteq\cdots\subseteq I_{1}\subseteq I_{0}$. In a similar way, $\mathbf{y^b}\in J_{d}$ implies $J_{h}:\mathbf{y^b}=S$ for all $1\leq h\leq d$. Hence, the equation \eqref{eq1} becomes 
\begin{align}\label{eq3}
   \p+\q=(I+J)_{k}:f=\sum_{h=0}^{d}(I_{k-h}:\mathbf{x^a})+\sum_{h=d+1}^k(J_{h}:\mathbf{y^b}). 
\end{align}
Again, due to $I_{k}\subseteq \cdots\subseteq I_{k-d}$ and $J_{d+1}\supseteq \cdots\supseteq J_{k}$, we have $I_{k}:\mathbf{x^a}\subseteq \cdots\subseteq I_{k-d}:\mathbf{x^a}$ and $J_{d+1}:\mathbf{y^b}\supseteq \cdots\supseteq J_{k}:\mathbf{y^b}$. Thus, equation \eqref{eq3} further reduces to
\begin{align}\label{eq4}
    \p+\q=(I+J)_{k}:f=(I_{k-d}:\mathbf{x^a})+(J_{d+1}:\mathbf{y^b}). 
\end{align}
Since $I_{k-d}:\mathbf{x^a}=\p$ and the minimal monomial generators of $J_{d+1}:\mathbf{y^b}$ belong to $B$, it follows from \eqref{eq4} that $J_{d+1}:\mathbf{y^b}=\q$. Hence,
\begin{align}\label{eq5}
    \vv_{\q}(J_{d+1})\leq \deg(\mathbf{y^b}).
\end{align}
Combining inequalities \eqref{eq2} and \eqref{eq5}, we get
$$\vv_{\p+\q}((I+J)_{k})=\deg(f)=\deg(\mathbf{x^a})+\deg(\mathbf{y^b})\geq \vv_{\p}(I_{k-d})+\vv_{\q}(J_{d+1}).$$
Therefore, 
\begin{align}\label{eq6}
    \vv_{\p+\q}((I+J)_{k})\geq\min_{\substack{0\leq d<k \\ 
    \p\in\ass(I_{k-d}) \\ \q\in\ass(J_{d+1})}}(\vv_{\p}(I_{k-d})+\vv_{\q}(J_{d+1})).
\end{align}
\par 

Now, to establish the reverse inequality, choose $d\in\{0,1,\ldots,k-1\}$, $\p\in \ass(I_{k-d})$ and $\q\in\ass(J_{d+1})$. Then there exist two monomials $\mathbf{x^a}\in A$ and $\mathbf{y^b}\in B$ such that $I_{k-d}:\mathbf{x^a}=\p$, $J_{d+1}:\mathbf{y^b}=\q$ with $\vv_{\p}(I_{k-d})=\deg(\mathbf{x^a})$ and $\vv_{\q}(J_{d+1})=\deg(\mathbf{y^b})$. By the choice of our filtration, $I_{k-h}:\mathbf{x^a}=S$ for all $d+1\leq h\leq k$ and $J_{h}:\mathbf{y^b}=S$ for all $1\leq h\leq d$. Then due to equation \eqref{eq1}, we get
$$(I+J)_{k}:\mathbf{x^ay^b}=\sum_{h=0}^{d}(I_{k-h}:\mathbf{x^a})+\sum_{h=d+1}^k(J_{h}:\mathbf{y^b}).$$
Similar argument as before $I_{k}\subseteq \cdots\subseteq I_{k-d}$ and $J_{d+1}\supseteq \cdots\supseteq J_{k}$ imply $I_{k}:\mathbf{x^a}\subseteq \cdots\subseteq I_{k-d}:\mathbf{x^a}$ and $J_{d+1}:\mathbf{y^b}\supseteq \cdots\supseteq J_{k}:\mathbf{y^b}$. Thus, the above equation becomes
\begin{align*}
    (I+J)_{k}:\mathbf{x^ay^b}& =(I_{k-d}:\mathbf{x^a})+(J_{d+1}:\mathbf{y^b}) \\
    &=\p+\q.
\end{align*}
Thus, $\p+\q\in\ass((I+J)_{k})$ and $\vv_{\p+\q}((I+J)_{k})\leq \vv_{\p}(I_{k-d})+\vv_{\q}(J_{d+1})$. Therefore,
\begin{align}\label{eq7}
    \vv_{\p+\q}((I+J)_{k})\leq\min_{\substack{0\leq d<k \\ 
    \p\in\ass(I_{k-d}) \\ \q\in\ass(J_{d+1})}}(\vv_{\p}(I_{k-d})+\vv_{\q}(J_{d+1})).
\end{align}
Hence, the result follows by inequalities \eqref{eq6} and \eqref{eq7}.
\end{proof}

In literature, the $k$-th symbolic power of an ideal $L\subset R$ is defined in two ways:

 \begin{definition}[Symbolic power]\label{def-symbolic}
 {\rm Let $L\subset R$ be an ideal. Then, the  $k$-th symbolic power of $L$ is defined as
 \begin{enumerate}
     \item $\displaystyle L^{(k)}=\bigcap_{\p\in \mathrm{Ass}(L)}(L^k R_{\p} \cap R).$
     \item $\displaystyle L^{(k)}=\bigcap_{\p\in \mathrm{Min}(L)}(L^k R_{\p} \cap R),$  where $\mathrm{Min}(L)$ is the set of all minimal primes of $L$.
 \end{enumerate}}
 \end{definition}

 The following result regarding the $\vv$-function of symbolic powers of the sum of monomial ideals holds for both definitions of symbolic powers as given in \Cref{def-symbolic}.

\begin{corollary}\label{corsympower}
    Let $I\subset A$ and $J\subset B$ be two monomial ideals. If $\p\subset A$ and $\q\subset B$ be two monomial prime ideals such that $\p+\q\in\ass(I+J)^{(k)}$, then 
    $$\vv_{\p+\q}((I+J)^{(k)})=\min_{\substack{0\leq d<k \\ 
    \p\in\ass(I^{(k-d)}) \\ \q\in\ass(J^{(d+1)})}}(\vv_{\p}(I^{(k-d)})+\vv_{\q}(J^{(d+1)})).$$
    \end{corollary}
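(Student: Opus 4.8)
The plan is to deduce Corollary~\ref{corsympower} directly from Theorem~\ref{thmmain} by verifying that the symbolic power filtration $\{L^{(k)}\}$ of a monomial ideal $L$ satisfies all the hypotheses of the theorem. There are three things to check: that $\{I^{(k)}\}$ and $\{J^{(k)}\}$ are graded $I$- and $J$-filtrations, that the binomial expansion $(I+J)^{(k)}=\sum_{i+j=k}I^{(i)}J^{(j)}$ holds, and that \Cref{property} is satisfied by these filtrations. The first point is standard: $L^{(0)}=R$, $L^{(k+1)}\subseteq L^{(k)}$, $L^{(k)}L^{(r)}\subseteq L^{(k+r)}$, and $L^k\subseteq L^{(k)}$ all follow from \Cref{def-symbolic}, and this holds for both versions of the definition. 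The binomial expansion for symbolic powers of a sum of ideals in disjoint variables is a known result (see \cite{hntt20symbsum}, \cite{hjkn23symbsumgen}), valid for both definitions; I would cite it rather than reprove it.

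The main point requiring genuine argument is verifying property (b) of \Cref{property} for the symbolic power filtration: if $\p\in\ass(L^{(k)})$ and $f$ is a monomial with $L^{(k)}:f=\p$, then $f\in L^{(k-1)}$. First observe that $\ass(L^{(k)})\subseteq\ass(L)$ (in fact equals $\min(L)$ under definition (2), and is contained in $\ass(L)$ under definition (1), for monomial ideals), so $\p$ is one of the primes appearing in the intersection defining $L^{(k)}$. I would argue locally: since $L^{(k)}:f=\p$, in particular $f\notin L^{(k)}$, so there is some prime $\mathfrak{r}$ in the defining intersection with $f\notin L^k R_{\mathfrak{r}}\cap R$; I want to show in fact $f\in L^{k-1}R_{\mathfrak{r}}\cap R$ for every such $\mathfrak{r}$, which gives $f\in L^{(k-1)}$. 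The key is that for a monomial ideal, $L^{(k)}:f=\p$ forces $f$ to lie "just one power below": concretely, for each $\mathfrak{r}$ in the defining set, $L^{(k)}R_{\mathfrak{r}}=L^kR_{\mathfrak{r}}$, and the colon condition together with $\p$ being prime (hence $f\cdot\p\subseteq L^{(k)}$ but $f\notin L^{(k)}$) should be pushed through the localization. An alternative, perhaps cleaner, route is to use the explicit combinatorial description of symbolic powers of monomial ideals: for a monomial ideal, $L^{(k)}=\bigcap_{i}\p_i^{(k\cdot?)}$-type formulas, or better, use that $L^{(k)}$ has a primary decomposition whose components are symbolic-power-like, and that colon by a monomial distributes over intersection; combined with the fact that if $Q$ is $\p$-primary and $Q:f=\p$ then $f$ lies in the "$(k-1)$-st layer" when $Q$ comes from a $k$-th symbolic power.

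Concretely, I would use the following argument. Write $L^{(k)}=\bigcap_{\mathfrak{r}}Q_{\mathfrak{r}}$ over the relevant primes, where $Q_{\mathfrak{r}}=L^kR_{\mathfrak{r}}\cap R$ is $\mathfrak{r}$-primary. Then $L^{(k)}:f=\bigcap_{\mathfrak{r}}(Q_{\mathfrak{r}}:f)=\p$. For a monomial $f$ and monomial primary ideals, $Q_{\mathfrak{r}}:f$ is again $\mathfrak{r}$-primary (or the whole ring), so the intersection equals $\p$ only if exactly one $Q_{\mathfrak{r}}:f$ equals $\p$ (forcing $\mathfrak{r}=\p$) and all others equal $R$ (forcing $f\in Q_{\mathfrak{r}}$, hence $f\in L^{(k)}R_{\mathfrak{r}}\cap R\subseteq L^{(k-1)}R_{\mathfrak{r}}\cap R$ trivially). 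For the one component with $\mathfrak{r}=\p$: $Q_{\p}:f=\p$ means $f\notin Q_{\p}=L^kR_{\p}\cap R$ but $f\cdot g\in L^kR_{\p}\cap R$ for every $g\in\p$. Working in $R_{\p}$, $f\notin L^kR_{\p}$ but $f\mathfrak{p}R_{\p}\subseteq L^kR_{\p}$. Now in the localization, using that monomial ideals localize to monomial ideals and the graded/combinatorial structure, $f\mathfrak{p}R_{\p}\subseteq L^kR_{\p}$ forces $f\in L^{k-1}R_{\p}$: indeed if $f$ were not in $L^{k-1}R_{\p}$ then no variable multiple of it lands in $L^k R_{\p}$. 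Hence $f\in L^{k-1}R_{\p}\cap R\subseteq L^{(k-1)}R_{\p}\cap R$. Combining all components, $f\in\bigcap_{\mathfrak{r}}(L^{(k-1)}R_{\mathfrak{r}}\cap R)=L^{(k-1)}$. I expect the localization step---showing $f\mathfrak{p}R_{\p}\subseteq L^kR_{\p}$ implies $f\in L^{k-1}R_{\p}$---to be the one needing the most care, since it genuinely uses that we are over a field and $L$ is monomial (for general ideals this can fail); but it reduces to a clean statement about monomial ideals in the local ring $R_{\p}$, which is itself a (localized) polynomial ring. Once \Cref{property} is confirmed, Corollary~\ref{corsympower} is an immediate application of Theorem~\ref{thmmain}, and the ``in particular'' statement on $\vv((I+J)^{(k)})$ follows by minimizing over the associated primes of $(I+J)^{(k)}$ that have the form $\p+\q$, noting that every associated prime of $(I+J)^{(k)}$ for monomial ideals in disjoint variables is of this form.
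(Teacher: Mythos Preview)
Your overall architecture matches the paper's exactly: reduce to Theorem~\ref{thmmain} by checking (i) the binomial expansion for symbolic powers (which you, like the paper, cite from \cite{hntt20symbsum,hjkn23symbsumgen}) and (ii) condition~(b) of \Cref{property}. The only substantive difference is how you verify~(b).

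The paper bypasses your primary-decomposition/localization argument by invoking a structural fact (cited from \cite{hht07}): for a monomial ideal $L$ there exist monomial ideals $Q_1,\ldots,Q_t$ with $L^{(h)}=Q_1^h\cap\cdots\cap Q_t^h$ for all $h\ge 1$. Given this, the proof of (b) is a two-line divisibility argument: pick any variable $x\in\p$; from $xf\in Q_i^h$ write $u_1\cdots u_h\mid xf$ with $u_j\in Q_i$, cancel the factor of $x$, and conclude $f\in Q_i^{h-1}$ for every $i$, hence $f\in L^{(h-1)}$. This handles both versions of \Cref{def-symbolic} simultaneously and never needs to analyze which component survives a colon.

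Your route via $L^{(k)}=\bigcap_{\mathfrak r}Q_{\mathfrak r}$ with $Q_{\mathfrak r}=L^kR_{\mathfrak r}\cap R$ is morally the same but carries avoidable baggage. Two points to tighten: first, under definition~(1) the ideal $Q_{\mathfrak r}$ need not be $\mathfrak r$-primary when $\mathfrak r$ is an embedded prime of $L$, so the claim ``$Q_{\mathfrak r}:f$ is $\mathfrak r$-primary or $R$'' and the ensuing ``exactly one component is nontrivial'' deduction require more care. Second, the step you flag as delicate, namely that $f\p R_\p\subseteq L^kR_\p$ forces $f\in L^{k-1}R_\p$, is correct but you do not prove it; once you pass to the polynomial ring in the variables of $\p$ (the others being units in $R_\p$) it is exactly the divisibility argument above: $x_1f$ is divisible by a product of $k$ monomial generators, $x_1$ divides one factor, and the remaining $k-1$ factors divide $f$. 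So your approach can be completed, but the paper's use of the $\bigcap Q_i^h$ description gives a shorter, decomposition-free proof that works uniformly for both definitions of symbolic power.
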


\begin{proof}
    Due to the articles \cite{hntt20symbsum} and \cite{hjkn23symbsumgen}, both the definitions of symbolic power given in \Cref{def-symbolic} satisfy the binomial expansion. Again, it is well-known that for a monomial ideal $L$, there exists monomial ideals $Q_1,\ldots,Q_t$ such that for all $h\geq 1$, the $h$-th symbolic power of $L$ can be written as $L^{(h)}=Q_{1}^{h}\cap \cdots\cap Q_{t}^{h}$ (see \cite[Page 2]{hht07}). Now, suppose $L^{(h)}:f=\p$ for some monomial $f$ and some $\p\in\ass(L^{(h)})$. Choose a variable $x\in\p$. Then $xf\in L^{(h)}$, which imply $xf\in Q_{i}^{h}$ for all $1\leq i\leq t$. Then there exist monomials $u_1,\ldots,u_h\in Q_{i}$ such that $xf=u_1\cdots u_h$, and thus, $x$ divides $u_{i}$ for some $i\in\{1,\ldots, h\}$. Consequently, it follows that $u_1\cdots \hat{u_{i}}\cdots u_{h}$ divides $f$. Therefore, we have $f\in Q_{i}^{h-1}$. This is true for each $i\in\{1,\ldots,t\}$, and hence, $f\in Q_{1}^{h-1}\cap \cdots\cap Q_{t}^{h-1}=L^{(h-1)}$. This completes the proof by \Cref{thmmain}.
\end{proof}

Now, we will show that a similar formula of $\vv$-function holds for integral closure filtration whenever there is a binomial expansion.

\begin{definition}{\rm
    Let $I\subset R$ be an ideal. An element $f\in R$ is said to be \textit{integral} over $I$ if there exists an integer $n$ and elements $r_i\in I^{i}$, $i\in\{1,\ldots,n\}$, such that
    $$f^n+r_1f^{n-1}+r_2f^{n-2}+\cdots+r_{n-1}f+r_n=0.$$
The set of all elements of $R$ that are integral over $I$ is called the \textit{integral closure} of $I$, which is denoted by $\overline{I}$.
}
\end{definition}

The integral closure of powers of a graded ideal $I\subset R$ forms a graded $I$-filtration. The following result is concerned on the (local) $\vv$-number of integral closure of powers of the sum of two monomial ideals in two polynomial rings with disjoint variables.

\begin{corollary}\label{corintclosure}
    Let $I\subset A$ and $J\subset B$ be two monomial ideals such that $\overline{(I+J)^k}=\sum_{i+j=k}\overline{I^i}\,\, \overline{J^j}$. If $\p\subset A$ and $\q\subset B$ be two monomial prime ideals such that $\p+\q\in\ass\overline{(I+J)^{k}}$, then 
    $$\vv_{\p+\q}(\overline{(I+J)^{k}})=\min_{\substack{0\leq d<k \\ 
    \p\in\ass(\overline{I^{k-d}}) \\ \q\in\ass(\overline{J^{d+1}})}}(\vv_{\p}(\overline{I^{k-d}})+\vv_{\q}(\overline{J^{d+1}})).$$
\end{corollary}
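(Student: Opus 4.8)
\emph{Proof proposal.} The plan is to obtain \Cref{corintclosure} from \Cref{thmmain}, in exactly the way \Cref{corsympower} was obtained from it: by applying \Cref{thmmain} to the integral-closure filtrations $\mathcal{I}=\{\overline{I^k}\}_{k\ge 0}$ and $\mathcal{J}=\{\overline{J^k}\}_{k\ge 0}$. These are graded $I$- and $J$-filtrations, since $\overline{R}=R$, the inclusions $I^{k+1}\subseteq I^k$ and $I^kI^r\subseteq I^{k+r}$ pass to integral closures (using $\overline{L}\,\overline{L'}\subseteq\overline{LL'}$), and $I^k\subseteq\overline{I^k}$; and the binomial expansion $\overline{(I+J)^k}=\sum_{i+j=k}\overline{I^i}\,\overline{J^j}$ is our standing hypothesis. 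Thus the whole statement follows once \Cref{property} is verified for $\mathcal{I}$ (and, by symmetry, for $\mathcal{J}$).

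Part (a) of \Cref{property} is the classical fact that the integral closure of a monomial ideal is again a monomial ideal. For part (b) I would use Newton polyhedra in place of the primary-decomposition trick used in \Cref{corsympower}. Recall that for a monomial ideal $L$ in $K[z_1,\dots,z_t]$ and $h\ge 1$ one has $\overline{L^h}=\langle \mathbf{z^a}:\mathbf{a}\in h\cdot NP(L)\rangle$, where $NP(L)=\mathrm{conv}(\{\mathbf{a}:\mathbf{z^a}\in L\})+\mathbb{R}_{\ge 0}^{t}$. Setting $\omega_L(\mathbf{w}):=\min_{\mathbf{g}\in\mathcal{G}(L)}\langle\mathbf{g},\mathbf{w}\rangle$ for $\mathbf{w}\in\mathbb{R}_{\ge 0}^{t}$ (the support function of $NP(L)$), this amounts to: a monomial $\mathbf{z^a}$ lies in $\overline{L^h}$ if and only if $\langle\mathbf{a},\mathbf{w}\rangle\ge h\,\omega_L(\mathbf{w})$ for every $\mathbf{w}\in\mathbb{R}_{\ge 0}^{t}$. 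I will also use that $\omega_{L^{h}}=h\,\omega_L$, which is immediate since a minimal generator of $L^{h}$ has exponent a sum of $h$ exponents of generators of $L$.

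Now fix $k\ge 1$, a monomial prime $\p=\langle x_i:i\in W\rangle\subset A$ and a monomial $f=\mathbf{x^c}$ with $\overline{I^k}:f=\p$. The crucial observation is that $I\subseteq\p$: since $\p\in\ass(\overline{I^k})$ we get $I^k\subseteq\overline{I^k}\subseteq\p$, and primeness of $\p$ forces $I\subseteq\p$. Hence every $\mathbf{g}\in\mathcal{G}(I)$ has $g_{i_0}\ge 1$ for some $i_0\in W$, so $\langle\mathbf{g},\mathbf{w}\rangle\ge g_{i_0}w_{i_0}\ge w_{i_0}\ge\min_{i\in W}w_i$, and therefore $\omega_I(\mathbf{w})\ge\min_{i\in W}w_i$ for all $\mathbf{w}\in\mathbb{R}_{\ge 0}^{n}$. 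On the other hand, for each $i\in W$ we have $x_if\in\overline{I^k}$, i.e.\ $\langle\mathbf{c}+\mathbf{e}_i,\mathbf{w}\rangle\ge k\,\omega_I(\mathbf{w})$, so $\langle\mathbf{c},\mathbf{w}\rangle\ge k\,\omega_I(\mathbf{w})-w_i$; minimizing over $i\in W$ and feeding in the bound on $\omega_I(\mathbf{w})$ gives $\langle\mathbf{c},\mathbf{w}\rangle\ge (k-1)\,\omega_I(\mathbf{w})=\omega_{I^{k-1}}(\mathbf{w})$ for every $\mathbf{w}\in\mathbb{R}_{\ge 0}^{n}$. By the membership criterion this says $\mathbf{c}\in NP(I^{k-1})$, i.e.\ $f\in\overline{I^{k-1}}$ (vacuously so when $k=1$, since $\overline{I^0}=R$). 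This establishes \Cref{property}(b).

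With \Cref{property} verified for both $\mathcal{I}$ and $\mathcal{J}$, \Cref{thmmain} applies and yields the stated formula for $\vv_{\p+\q}(\overline{(I+J)^{k}})$. I expect \Cref{property}(b) to be the only real obstacle: the substantive step is recognizing that the support-function description of $\overline{L^h}$ converts the colon condition $\overline{I^k}:f=\p$ — through the elementary containment $I\subseteq\p$ — into the single scalar inequality $\langle\mathbf{c},\mathbf{w}\rangle\ge(k-1)\,\omega_I(\mathbf{w})$; everything after that is a matter of reading this inequality back through the same criterion for $\overline{I^{k-1}}$ and keeping track of the degenerate case $k=1$.
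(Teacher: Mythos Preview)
Your proposal is correct, and the overall architecture matches the paper's: reduce to \Cref{thmmain} by checking \Cref{property} for the filtration $\{\overline{I^k}\}$, with part (b) being the only real work. Where you diverge is in the verification of part (b). The paper argues purely algebraically from the characterization $\overline{L}=\langle\mathbf{x^a}:(\mathbf{x^a})^m\in L^m\text{ for some }m\ge 1\rangle$: pick one variable $x\in\p$, so $xf\in\overline{I^h}$ gives $(xf)^l=u_1\cdots u_{lh}$ with each $u_i\in I$; then one observes that $x^l$ can be absorbed by at most $l$ of the factors, leaving $l(h-1)$ factors $u_{l+1}\cdots u_{lh}\in (I^{h-1})^l$ dividing $f^l$, whence $f\in\overline{I^{h-1}}$. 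Your route is convex-geometric, via the half-space description of the Newton polyhedron, and uses an ingredient the paper never invokes explicitly: the containment $I\subseteq\p$, converted into the bound $\omega_I(\mathbf{w})\ge\min_{i\in W}w_i$. Combined with the membership inequalities coming from \emph{all} variables in $\p$ (not just one), this yields $\langle\mathbf{c},\mathbf{w}\rangle\ge(k-1)\omega_I(\mathbf{w})$ directly. Your argument is slicker in that it sidesteps the mildly delicate ``at most $l$ factors absorb $x^l$'' step and makes the passage from $k$ to $k-1$ a one-line linear inequality; the paper's argument has the virtue of staying entirely within elementary monomial divisibility and needing no polyhedral machinery.
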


\begin{proof}
    For a monomial ideal $L$, the integral closure of $L$ can be defined as follows (see \cite[Proposition 12.1.2]{vil15})
    $$\overline{L}:=\langle \mathbf{x^a}\mid (\mathbf{x^a})^m\in L^m \text{ for some } m\geq 1\rangle.$$
    Let us choose any $\p\in \ass(\overline{I^h})$ and a monomial $f\in A$ such that $\overline{I^h}:f=\p$, where $h\geq 1$. Take a variable $x\in \p$. Then $xf\in \overline{I^h}$, which implies $(xf)^l\in (I^h)^l=I^{lh}$ for some $l\geq 1$. Thus, there exist monomials $u_1,\ldots,u_{lh}\in I$ such that $(xf)^l=x^lf^l=u_1\cdots u_{lh}$. Note that $x^l$ divides $u_1\cdots u_{lh}$, and so, without loss of generality, we may assume that $x^l$ divides $u_1\cdots u_d$ for some $d\leq l$. Then we can write $$f^l=(\frac{u_1\cdots u_d}{x^l})\cdot u_{d+1}\cdots u_{l+1}\cdots u_{lh}.$$
    Now, it is easy to see that $u_{l+1}\cdots u_{lh}\in I^{lh-l}=(I^{h-1})^l$. Since $u_{l+1}\cdots u_{lh}$ divides $f^l$, we have $f^l\in (I^{h-1})^l$. Thus, $f\in\overline{I^{h-1}}$ by the above definition of integral closure of monomial ideals. Similarly, if $\q\in \ass(\overline{J^{h}})$ and $g\in B$ be a monomial such that $\overline{J^{h}}:g=\q$, then $g\in \overline{J^{h-1}}$. Hence, the result follows by \Cref{thmmain}. 
\end{proof}

\begin{remark}{\rm 
    Let $I\subset A$ and $J\subset B$ be two monomial ideals. In \cite[Corollary 2.6]{bha23}, a sufficient condition is given for the equality $\overline{(I+J)^k}=\sum_{i+j=k}\overline{I^i}\,\, \overline{J^j}$. As an application, it also has been shown in \cite{bha23} that if (i) $I$ is square-free and $\overline{I^{k}}=I^{(k)}$ for all $k\geq 1$; or (ii) $I$ is a normally torsion-free square-free monomial ideal (i.e. $\ass(I^k)\subset \ass(I)$ for all $k\geq 1$), then the binomial expansion $\overline{(I+J)^k}=\sum_{i+j=k}\overline{I^i}\,\, \overline{J^j}$ holds. Thus, in these cases, we can apply the formula for the $\vv$-number given in \Cref{corintclosure}.
    }
\end{remark}

\noindent \textbf{Acknowledgements:} First, the author would like to thank Dr. Antonino Ficarra for his valuable suggestion. Also, the author thanks the National Board for Higher Mathematics (India) for the financial support through the NBHM Postdoctoral Fellowship and thanks to Chennai Mathematical Institute for providing a good research environment. An Infosys Foundation fellowship partially supports the author.

\printbibliography
\end{document}